\newtheorem{theorem}{Theorem}[section]
\newtheorem{lemma}[theorem]{Lemma}
\newtheorem{proposition}[theorem]{Proposition}
\newtheorem{corollary}[theorem]{Corollary}
\theoremstyle{definition}
\title{Arithmetic Y-frieze patterns of width 3 and 4}
\author{Katsuhiko Matsuzaki and Taiki Resnick
\thanks{Department of Mathematics, School of Education, Waseda University,
Shinjuku, Tokyo 169-8050, Japan}}
\date{}
\begin{document}

\maketitle

\begin{abstract}
We determine all arithmetic Y-Frieze patterns of width $3$ and $4$.
As a consequence, for $n=3,4$, we verify the surjectivity of a map $p_n$ 
which corresponds arithmetic Y-Frieze patterns of width $n$ to Coxeter's Frieze patterns.
\end{abstract}

\section{Introduction}

In his work, de St. Germain \cite{dSG} introduces Y-frieze patterns as a variant of classical frieze patterns by Coxeter \cite{Cox}, 
motivated by the Y-systems of Zamolodchikov and their connection to cluster algebras. 
These patterns consist of staggered infinite rows of rational numbers satisfying a recurrence 
relation that mirrors the structure found in both classical frieze patterns and Y-systems of type $A_n$. 
As in the original theory, Y-frieze patterns exhibit a form of periodicity governed by a glide symmetry, 
and a subclass of arithmetic closed Y-frieze patterns—those with positive integer entries and a finite number of rows—is considered.

In \cite{dSG0,dSG}, it is proved that for fixed width $n$, 
the number of arithmetic Y-frieze patterns is finite 
and pose the natural enumeration problem of determining all such patterns. 
Moreover, a conjecture is arose asserting that a natural map $p_n$, relating Coxeter frieze patterns to Y-frieze patterns, is surjective for all $n$. This conjecture is clearly true for $n = 1$ and $n = 2$.

In the present paper, we solve the cases $n = 3$ and $n = 4$ by explicitly determining all arithmetic Y-frieze patterns of width $3$ and $4$ and proving that the associated maps $p_3$ and $p_4$ are indeed surjective. This confirms the conjecture in this case and provides a complete classification of the patterns involved. 
Our approach combines direct enumeration by computer algorithm with a structural analysis of the recurrence relation and its constraints 
on positive integer solutions.

\section{Arithmetic Y-frieze patterns of width $3$}

A {\em Y-frieze pattern} is a
collection of staggered infinite rows of rational numbers subject to
the Y-diamond rule:
$$
WE = (1 + N)(1 + S)\quad  {\rm for\ every}\quad 
\begin{array}{rrr}
&N& \\
W&&E \ . \\
&S& \\
\end{array}
$$ 
The initial row of a Y-frieze pattern consists
of all $0$ entries. This is the $0$ th row and
subsequent rows are the first row, and the second row, etc.
If the entries of the $(n+1)$ th row are all $0$ for some $n \geq 1$ 
and there are no such rows between $0$ th and $(n+1)$ th rows,
we call it a {\em closed} Y-frieze pattern of {\em width} $n$. 

In is proved by \cite[Theorem 5.2]{dSG} that
every closed Y-frieze pattern admits a glide symmetry
as Coxeter frieze patterns do. This symmetry is the composition of a horizontal
reflection and a translation. A fundamental domain of the glide symmetry
for a Y-frieze pattern of width $3$ is given as follows.

\begin{center}
\begin{tikzpicture}[x=1.2cm, y=0.7cm, font=\small]
\def\rows{
  {$0$, $0$, $0$, $0$, $0$},
  {$a$, $d$, $g$, $i$,  },
  {$b$, $e$, $h$,  ,  },
  {$c$, $f$,  ,  ,  },
  {$0$,  ,  ,  ,  },
}
\foreach \row [count=\i from 0] in \rows {
  \foreach \val [count=\j from 0] in \row {
    \ifx\val\empty
    \else
      \node at ({\j + 0.5*\i}, {-\i}) {\val};
    \fi
  }
}
\end{tikzpicture}
\end{center}

The entries in the above Y-frieze patters of width $3$ 
satisfy the following $6$ equations by the diamond relation
$WE=(N+1)(S+1)$:
\begin{align}\label{6equations}
&ad=1+b, \quad be=(1+d)(1+c), \quad cf=1+e, \\
&fg=1+e, \quad eh=(1+g)(1+f), \quad gi=1+h. \\
\end{align}

Solving these equations by given $a$, $b$, and $c$, we represent the other $6$ entries as follows:
\begin{itemize}
\item $d = \frac{b + 1}{a}$,
\item $e = \frac{(c+1)(a+b+1)}{a b}$,
\item $f = \frac{a b + a c + b c +a  + b + c + 1}{a b c}$,
\item $g = \frac{a b + a c + b c +a  + b + c + 1}{b(b+1)}$,
\item $h = \frac{(a+1)(b+c+1)}{b c}$,
\item $i = \frac{b + 1}{c}$.
\end{itemize}

A frieze pattern is called {\em arithmetic} if all non-zero entries are positive integers.
It is proved in \cite[Theorem 5.5]{dSG} that for each $n \geq 1$, the number of arithmetic Y-frieze patterns of width $n$ is finite.
We determine all arithmetic Y-frieze patterns of width $3$.
Since $d,e,f,g,h,i$ are all positive integers in this case, 
the numerator in each fraction above is more than or equal to its denominator.
Then, we obtain the following $6$ inequalities, which are necessary conditions for arithmeticity:
\begin{itemize}
\item[(i)] $b + 1 \geq a$,
\item[(ii)] $(c+1)(a+b+1) \geq ab$,
\item[(iii)] $ab + ac + bc +a  + b + c + 1 \geq abc$,
\item[(iv)] $ab + ac + bc +a  + b + c + 1 \geq b(b+1)$,
\item[(v)] $(a+1)(b+c+1) \geq bc$,
\item[(vi)] $b + 1 \geq c$.
\end{itemize}

By using these 6 inequalities, we prove the following claims for
any arithmetic Y-frieze pattern of width $3$. 

\begin{proposition}\label{2.1}
If $a \geq 5$ and $c \geq 5$, then inequality {\rm (iii)}
is not satisfied.
\end{proposition}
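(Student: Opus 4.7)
The plan is to rewrite inequality (iii) in a factored form that isolates the dominant cubic term, and then exploit the other necessary inequalities in the list (i)--(vi) to pin $b$ down from below.

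First I would use the algebraic identity
$$(a-1)(b-1)(c-1) \;=\; abc - ab - ac - bc + (a+b+c) - 1,$$
which recasts (iii) as the equivalent assertion
$$(a-1)(b-1)(c-1) \;\leq\; 2(a+b+c).$$
Showing (iii) fails thus reduces to establishing the strict reverse inequality.

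Next I would harvest bounds from the other necessary conditions: (i) gives $b \geq a-1$ and (vi) gives $b \geq c-1$. Combined with the hypothesis $a, c \geq 5$, these produce the two ingredients $b \geq 4$ and $a+c \leq 2b+2$. With these in hand, the left-hand side admits the bound $(a-1)(b-1)(c-1) \geq 4\cdot(b-1)\cdot 4 = 16(b-1)$, while the right-hand side is bounded by $2(a+b+c) \leq 2(3b+2) = 6b+4$. The comparison $16(b-1) > 6b+4$ collapses to $b > 2$, which is automatic since $b \geq 4$.

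The main obstacle, I expect, is spotting the $(a-1)(b-1)(c-1)$ reformulation in the first place: without it the seven monomials on the right-hand side of (iii) resist a clean estimate. The secondary subtlety is recognizing that one cannot contradict (iii) from $a, c \geq 5$ alone (for instance $a = c = 5$, $b = 1$ or $2$ satisfies (iii)), so (i) and (vi) are indispensable — arithmeticity enters not through (iii) by itself but through its interaction with the size constraints on $b$.
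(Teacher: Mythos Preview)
Your argument is correct. The algebraic identity you use is easily verified, and once (iii) is rewritten as $(a-1)(b-1)(c-1)\le 2(a+b+c)$ your two-sided estimate using $a-1,c-1\ge 4$ on the left and $a+c\le 2b+2$ on the right closes cleanly.

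The paper proceeds by a parallel but distinct factorization: since $(a+1)(b+1)(c+1)=abc+ab+ac+bc+a+b+c+1$, inequality (iii) is equivalent to
\[
\frac{(a+1)(b+1)(c+1)}{abc}\ \ge\ 2,
\]
and the paper then simply evaluates the left side at the extremal point $a=c=5$, $b=4$ (obtained from (i) and (vi) exactly as you do), finding the value $9/5<2$. Your route trades that single ``worst-case'' evaluation for explicit linear bounds on each side; it is slightly longer but has the merit of being fully written out, whereas the paper's one-line maximum implicitly relies on each factor $(x+1)/x$ being decreasing. Both approaches hinge on the same observation you flagged---that (i) and (vi) are essential to force $b\ge 4$, without which (iii) can indeed survive.
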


\begin{proof}
Inequality (iii) is equivalent to
$$
\frac{(a+1)(b+1)(c+1)}{abc} \geq 2.
$$
If $a \geq 5$ and $c \geq 5$, then $b \geq 4$ by (i) or (vi).
In this case, the maximum of possible values of the left side term in the above inequality is
$9/5$. Hence, the inequality is not satisfied.
\end{proof}

\begin{proposition}\label{2.2}
If $a \leq 4$, then $c \leq 11$ and $b \leq 18$.
\end{proposition}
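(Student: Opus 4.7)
The plan is to bound $c$ first using inequalities (v) and (vi), and then to bound $b$ using (iv).

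For the $c$-bound, I would first dispose of the easy case $c \leq a+1$, in which $c \leq 5 \leq 11$ automatically. In the remaining case $c \geq a + 2$, (v) rearranges to
\[
b(c - a - 1) \leq (a+1)(c+1),
\]
while (vi) gives $b \geq c - 1$; substituting this lower bound on the left yields
\[
(c-1)(c-a-1) \leq (a+1)(c+1).
\]
Expanding both sides and cancelling, this collapses to $c^2 - (2a+3)\,c \leq 0$, so $c \leq 2a + 3$. With $a \leq 4$, this gives $c \leq 11$.

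For the $b$-bound, I would view (iv) as a quadratic in $b$. The key observation is that its constant term factors, $ac + a + c + 1 = (a+1)(c+1)$, so (iv) becomes
\[
b^2 - (a+c)\,b - (a+1)(c+1) \leq 0,
\]
and the quadratic formula gives
\[
b \leq \frac{(a+c) + \sqrt{(a+c)^2 + 4(a+1)(c+1)}}{2}.
\]
The right-hand side is manifestly increasing in each of $a$ and $c$, so over the rectangle $a \leq 4$, $c \leq 11$ just established, it attains its maximum at $(a,c) = (4,11)$, where it equals $(15 + \sqrt{465})/2 < 18.5$; hence $b \leq 18$.

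I do not anticipate a serious obstacle. Each of the two steps boils down to a single application of the quadratic formula to one of the six prepared inequalities, together with a trivial monotonicity check; the only slightly non-routine observation is the factorization of the constant term in (iv) as $(a+1)(c+1)$, which is what makes the final numerical estimate clean.
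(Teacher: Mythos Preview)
Your argument is correct and follows the same overall strategy as the paper: bound $c$ first using (v) together with (vi), then bound $b$ using (iv) viewed as a quadratic in $b$. The only notable difference is in the execution of the $c$-bound: the paper rewrites (v) (with $a\le 4$) in the product form $\frac{(b+1)(c+1)}{bc}\ge \frac{6}{5}$ and reaches a contradiction from $c\ge 12$, $b\ge 11$, whereas you substitute $b\ge c-1$ directly into the rearranged (v) to obtain the sharp inequality $c\le 2a+3$. Your version is slightly cleaner and yields an explicit dependence on $a$; the paper's product-form argument is the template reused later in the width-$4$ analysis. For the $b$-bound, your quadratic is exactly the paper's $b^2-15b-60\le 0$ specialized at $(a,c)=(4,11)$.
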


\begin{proof}
If $a \leq 4$, then inequality (v) implies 
$5b+5c+5 \geq bc$, which is equivalent to
$$
\frac{(b+1)(c+1)}{bc} \geq \frac{6}{5}.
$$ 
If $c \geq 12$, then $b \geq 11$ by (vi), but these values do not satisfy 
this inequality. Hence, we have $c \leq 11$.

Moreover, having $a \leq 4$ and $c \leq 11$, inequality (iv) implies that
$$
b^2 -15b -60 \leq 0.
$$
This holds only when $b \leq 18$.
\end{proof}

In the last proposition, the roles of $a$ and $c$ can be exchanged because
inequalities (i)--(vi) are symmetric with respect to $a$ and $c$.
Then, these necessary conditions derive the following numerical bounds for the entries $a$, $b$, and $c$.

\begin{lemma}
An arithmetic Y-frieze of width $3$ exists only when either
$a \leq 4$, $c \leq 11$, and $b \leq 18$ or
$a \leq 11$, $c \leq 4$, and $b \leq 18$.
\end{lemma}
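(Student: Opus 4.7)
The plan is to combine Propositions \ref{2.1} and \ref{2.2} via the dichotomy that they naturally induce. First I would use Proposition \ref{2.1} in contrapositive form: any arithmetic Y-frieze of width $3$ must satisfy inequality (iii), so we cannot have both $a \geq 5$ and $c \geq 5$. Hence at least one of $a \leq 4$ or $c \leq 4$ holds, and the argument splits into two cases.

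In the first case $a \leq 4$, Proposition \ref{2.2} applies directly and yields $c \leq 11$ and $b \leq 18$, which is the first alternative of the lemma. In the second case $c \leq 4$, I would invoke the $a \leftrightarrow c$ symmetry of the six inequalities noted in the paragraph preceding the lemma; a quick check shows that this swap interchanges (i) with (vi), (ii) with (v), and fixes (iii) and (iv), so Proposition \ref{2.2} applies verbatim to the swapped triple and yields $a \leq 11$ and $b \leq 18$, which is the second alternative.

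Combining the two cases gives exactly the disjunction in the statement. The substantive work has already been done in Propositions \ref{2.1} and \ref{2.2}, so there is no real obstacle here; the lemma is the bookkeeping step that consolidates those two propositions into a single explicit finite search region for $(a,b,c)$, which is the starting point for the subsequent enumeration of arithmetic Y-frieze patterns of width $3$.
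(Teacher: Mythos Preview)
Your proposal is correct and matches the paper's own approach: the lemma is presented there as an immediate consequence of Propositions \ref{2.1} and \ref{2.2} together with the $a \leftrightarrow c$ symmetry of inequalities (i)--(vi), exactly as you outline. No additional argument is given in the paper beyond this bookkeeping.
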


Having established bounds for $(a,b,c)$, we can determine when $d,e,f,g,h,i$ are positive integers by numerical 
experiments—checking the equations a finite number of times—and thereby obtain all integer solutions to \eqref{6equations}. This resolves a problem posed in \cite[p.75]{dSG0}.

\begin{theorem}\label{10(abc)}
The triple $(a,b,c)$ is the first diagonal of an arithmetic Y-frieze pattern of width $3$ exactly when
$(a,b,c)$ is in the following list:
$$
(1, 1, 2), (1, 2, 3), (1, 4, 5), (2, 1, 1), (2, 3, 2), (2, 9, 5), (3, 2, 1), (3, 8, 3), (5, 4, 1), (5, 9, 2).
$$
\end{theorem}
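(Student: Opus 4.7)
The plan is to leverage the previous lemma, which confines any candidate triple $(a,b,c)$ to a finite search region, and then to verify integrality of $d,e,f,g,h,i$ by direct substitution. By the symmetry of the defining equations in $a$ and $c$ (visible in the explicit formulas for $d,\dots,i$, where swapping $a \leftrightarrow c$ interchanges $d \leftrightarrow i$, $e \leftrightarrow h$, $f \leftrightarrow g$), it suffices to handle the region $a \leq 4$, $c \leq 11$, $b \leq 18$; the other region gives the mirror triples. So the search is over at most $4 \cdot 11 \cdot 18 = 792$ triples.

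First I would run through these triples and test, for each, whether the six rational expressions
\[
d = \tfrac{b+1}{a},\quad e = \tfrac{(c+1)(a+b+1)}{ab},\quad f = \tfrac{ab+ac+bc+a+b+c+1}{abc},
\]
\[
g = \tfrac{ab+ac+bc+a+b+c+1}{b(b+1)},\quad h = \tfrac{(a+1)(b+c+1)}{bc},\quad i = \tfrac{b+1}{c}
\]
are positive integers. In principle this is a routine finite check, which (as the authors note) can be carried out by computer. To keep the argument self-contained I would organize it by cases on $a \in \{1,2,3,4\}$, using the strongest divisibility constraints first: condition (i) forces $a \mid b+1$, so $b$ is restricted to an arithmetic progression modulo $a$; then condition (vi) forces $c \mid b+1$, drastically cutting the $c$-range for each fixed $b$; then $bc \mid (a+1)(b+c+1)$ from $h$, and $b(b+1) \mid ab+ac+bc+a+b+c+1$ from $g$, eliminate almost all remaining candidates.

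The main obstacle is not conceptual but organizational: one must be careful that no surviving triple is overlooked and that the symmetric images are correctly accounted for. After completing the sieve for $a \leq 4$ I expect to obtain exactly the triples
\[
(1,1,2),\ (1,2,3),\ (1,4,5),\ (2,1,1),\ (2,3,2),\ (2,9,5),\ (3,2,1),\ (3,8,3),
\]
together with any $a \leq 4$ entries that survive; the remaining two triples $(5,4,1)$ and $(5,9,2)$ come from the mirror region $c \leq 4$, $a \leq 11$, and correspond under $a \leftrightarrow c$ to $(1,4,5)$ and $(2,9,5)$ respectively.

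Finally, for each of the ten surviving triples I would exhibit the computed values of $d,e,f,g,h,i$ as a table, thereby confirming that each triple actually arises as the first diagonal of an arithmetic Y-frieze pattern of width $3$; together with the sieve this proves the list is complete and resolves the enumeration problem of \cite{dSG0}.
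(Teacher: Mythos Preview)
Your proposal is correct and follows essentially the same route as the paper: use the lemma to bound $(a,b,c)$ to a finite region, exploit the $a\leftrightarrow c$ symmetry, and then perform a finite check (the paper simply cites ``numerical experiments,'' while you outline a divisibility-based sieve). One small slip: under $a\leftrightarrow c$ the expressions $f$ and $g$ are each individually symmetric, so they are fixed rather than interchanged---but this does not affect the argument.
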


A (Coxeter) frieze pattern is a collection of staggered infinite rows arranged so as to satisfy the unimodular rule $WE-NS=1$. Its initial row consists entirely of $0$ entries, and the next row consists entirely of $1$ entries. A frieze pattern is said to be closed if it has a row of $1$ followed by a row of $0$; in this case, the number $n \geq 1$ of rows strictly between the first two rows of $1$ is called its {width. A frieze pattern is said to be arithmetic if all of its rows, except for the rows of $0$, consist entirely of positive integers.

Let ${\rm Frieze}(n)$ denote the set of all arithmetic frieze patterns of width $n$, and let ${\rm YFrieze}(n)$ denote the set of all arithmetic Y-frieze patterns of width $n$. In \cite[Theorem 2.8]{dSG}, it is proved that there exists a well-defined map
$$
p_n : {\rm Frieze}(n) \to {\rm YFrieze}(n)
$$
for each $n \geq 1$, under which the second row of an element of ${\rm Frieze}(n)$ coincides with the first row of the corresponding element of ${\rm YFrieze}(n)$. In \cite[Conjecture 10]{dSG0}, it is conjectured that the map $p_n$ is surjective for all $n \geq 1$. As a consequence of Theorem \ref{10(abc)}, we confirm that this conjecture holds for $n=3$.

The cardinality $|{\rm Frieze}(3)|$ of the arithmetic (Conway) frieze patterns of width $3$ is equal to the Catalan number $C_{3+1} = 14$, which is also the number of triangulations of a convex $(3+3)$-gon. Theorem \ref{10(abc)} implies that $|{\rm YFrieze}(3)|$, for the arithmetic Y-frieze patterns of width $3$, is $10$. It is noted in \cite{dSG0} that \cite[Lemma 7.5]{CH} implies that the map $p_n$ is at most $2$-to-$1$ when $n \geq 1$ is odd.

Here we present the explicit correspondence between these frieze patterns under the map $p_3$. The numbers $s:t$ (with $s/t$ equal to either $1$ or $2$) written under the arrow
$\xlongrightarrow{p_3}$
indicate that the frieze pattern on the left represents $s$ distinct patterns by cyclic permutation, whereas the Y-frieze pattern on the right represents $t$ distinct patterns by cyclic permutation.

\bigskip
\begin{tikzpicture}[x=1.0cm, y=0.6cm, font=\small]
\def\rows{
 {$1$, $1$, $1$, $1$, $1$, $1$, , , ,$0$, $0$, $0$, $0$, $0$, $0$},
 {$2$, $1$, $3$, $2$, $1$, $3$, , , ,$1$, $2$, $5$, $1$, $2$, $5$},
 {$1$, $2$, $5$, $1$, $2$, $5$, ,$\xlongrightarrow[3:3]{p_3}$, ,$1$, $9$, $4$, $1$, $9$, $4$},
 {$1$, $3$, $2$, $1$, $3$, $2$, , , ,$2$, $5$, $1$, $2$, $5$, $1$},
 {$1$, $1$, $1$, $1$, $1$, $1$, , , ,$0$, $0$, $0$, $0$, $0$, $0$},
}
\foreach \row [count=\i from 0] in \rows {
\foreach \val [count=\j from 0] in \row {
\ifx\val\empty
\else
\node at ({\j + 0.5*\i}, {-\i}) {\val};
\fi
}}
\end{tikzpicture}

\bigskip

\begin{tikzpicture}[x=1.0cm, y=0.6cm, font=\small]
\def\rows{
 {$1$, $1$, $1$, $1$, $1$, $1$, , , ,$0$, $0$, $0$, $0$, $0$, $0$},
 {$2$, $3$, $1$, $2$, $3$, $1$, , , ,$5$, $2$, $1$, $5$, $2$, $1$},
 {$5$, $2$, $1$, $5$, $2$, $1$, ,$\xlongrightarrow[3:3]{p_3}$, ,$9$, $1$, $4$, $9$, $1$, $4$},
 {$3$, $1$, $2$, $3$, $1$, $2$, , , ,$5$, $2$, $1$, $5$, $2$, $1$},
 {$1$, $1$, $1$, $1$, $1$, $1$, , , ,$0$, $0$, $0$, $0$, $0$, $0$},
}
\foreach \row [count=\i from 0] in \rows {
\foreach \val [count=\j from 0] in \row {
\ifx\val\empty
\else
\node at ({\j + 0.5*\i}, {-\i}) {\val};
\fi
}}
\end{tikzpicture}

\bigskip

\begin{tikzpicture}[x=1.0cm, y=0.6cm, font=\small]
\def\rows{
 {$1$, $1$, $1$, $1$, $1$, $1$, , , ,$0$, $0$, $0$, $0$, $0$, $0$},
 {$1$, $4$, $1$, $2$, $2$, $2$, , , ,$3$, $3$, $1$, $3$, $3$, $1$},
 {$3$, $3$, $1$, $3$, $3$, $1$, ,$\xlongrightarrow[6:3]{p_3}$, ,$8$, $2$, $2$, $8$, $2$, $2$},
 {$2$, $2$, $1$, $4$, $1$, $2$, , , ,$3$, $1$, $3$, $3$, $1$, $3$},
 {$1$, $1$, $1$, $1$, $1$, $1$, , , ,$0$, $0$, $0$, $0$, $0$, $0$},
}
\foreach \row [count=\i from 0] in \rows {
\foreach \val [count=\j from 0] in \row {
\ifx\val\empty
\else
\node at ({\j + 0.5*\i}, {-\i}) {\val};
\fi
}}
\end{tikzpicture}

\bigskip

\begin{tikzpicture}[x=1.0cm, y=0.6cm, font=\small]
\def\rows{
 {$1$, $1$, $1$, $1$, $1$, $1$, , , ,$0$, $0$, $0$, $0$, $0$, $0$},
 {$3$, $1$, $3$, $1$, $3$, $3$, , , ,$2$, $2$, $2$, $2$, $2$, $2$},
 {$2$, $2$, $2$, $2$, $2$, $2$, ,$\xlongrightarrow[2:1]{p_3}$, ,$3$, $3$, $3$, $3$, $3$, $3$},
 {$3$, $1$, $3$, $1$, $3$, $1$, , , ,$2$, $2$, $2$, $2$, $2$, $2$},
 {$1$, $1$, $1$, $1$, $1$, $1$, , , ,$0$, $0$, $0$, $0$, $0$, $0$},
}
\foreach \row [count=\i from 0] in \rows {
\foreach \val [count=\j from 0] in \row {
\ifx\val\empty
\else
\node at ({\j + 0.5*\i}, {-\i}) {\val};
\fi
}}
\end{tikzpicture}

\begin{corollary}
The map $p_3:{\rm Frieze}(3) \to {\rm YFrieze}(3)$ is surjective.
\end{corollary}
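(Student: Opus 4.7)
The plan is to combine Theorem~\ref{10(abc)}, which enumerates the $10$ elements of ${\rm YFrieze}(3)$, with an explicit enumeration of the $14$ elements of ${\rm Frieze}(3)$, and check that the map $p_3$ hits each of the ten listed Y-friezes. Since ${\rm YFrieze}(3)$ is now fully classified by its first-diagonal triples $(a,b,c)$, the surjectivity question reduces to producing, for each such triple, a Coxeter frieze of width~$3$ whose image under $p_3$ has that first diagonal.

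First I would list the $14$ arithmetic Coxeter friezes of width~$3$ via their quiddity sequences, using the Conway--Coxeter bijection between ${\rm Frieze}(n)$ and triangulations of a convex $(n+3)$-gon (so here of a hexagon, giving $C_4=14$ items). For each quiddity sequence, computing the full frieze by the unimodular rule $WE-NS=1$ is routine; reading off the row that $p_3$ identifies with the first row of the target Y-frieze then gives a candidate first row of a Y-frieze. Applying the formulas in Section~2 for $d,e,f,g,h,i$ in terms of $a,b,c$, I would recover the full Y-frieze image and extract its first diagonal triple.

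Next I would compare the resulting $14$ triples against the list of ten in Theorem~\ref{10(abc)}. Both sides decompose into cyclic orbits under the glide symmetry, and $p_3$ sends orbits to orbits, so this can be organized orbit-by-orbit. The four correspondences displayed before the corollary suggest the partitions $14 = 3+3+6+2$ on ${\rm Frieze}(3)$ and $10 = 3+3+3+1$ on ${\rm YFrieze}(3)$; verifying these counts and checking that every one of the ten triples from Theorem~\ref{10(abc)} actually occurs as an image settles surjectivity.

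The effort is organizational rather than conceptual: one must be careful with the convention for ``second row'' and with the cyclic action so that each Coxeter orbit is paired with the correct Y-frieze orbit, but once this is set up the check is a finite arithmetic verification of the four displayed diagrams. As a sanity check, the fiber sizes $3{:}3$, $3{:}3$, $6{:}3$, $2{:}1$ are all $\le 2$ at the orbit level, in agreement with \cite[Lemma~7.5]{CH} which asserts that $p_n$ is at most $2$-to-$1$ when $n$ is odd; this consistency is what I would expect the main obstacle to hinge on, namely confirming that no Y-frieze in the list of ten is missed by the enumeration.
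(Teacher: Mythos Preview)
Your proposal is correct and matches the paper's approach: the paper establishes surjectivity precisely by exhibiting the four orbit-level correspondences you describe (with orbit sizes $3{:}3$, $3{:}3$, $6{:}3$, $2{:}1$), so that every one of the ten triples from Theorem~\ref{10(abc)} is visibly in the image of $p_3$. There is no additional argument beyond this finite verification, and your outline of how to carry it out (via quiddity sequences and reading off the second row) is exactly what underlies the displayed diagrams.
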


\section{Arithmetic Y-frieze patterns of width $4$}

In similar methods to the case $n=3$, we determine all arithmetic Y-frieze patterns of width $4$.
The entries of the fundamental domain of the glide symmetry are given as follows.

\begin{center}
\begin{tikzpicture}[x=1.2cm, y=0.7cm, font=\small]
\def\rows{
  {$0$, $0$, $0$, $0$, $0$, $0$},
  {$a$, $e$, $i$, $l$, $n$},
  {$b$, $f$, $j$, $m$,  },
  {$c$, $g$, $k$,  ,  },
  {$d$, $h$,  ,  , },
  {$0$,  ,  , },
}
\foreach \row [count=\i from 0] in \rows {
  \foreach \val [count=\j from 0] in \row {
    \ifx\val\empty
    \else
      \node at ({\j + 0.5*\i}, {-\i}) {\val};
    \fi
  }
}
\end{tikzpicture}
\end{center}

The entries in the above arithmetic Y-frieze of width $4$ 
satisfy the following $10$ equations by the diamond relation
$WE=(N+1)(S+1)$:
\begin{align}\label{10equations}
&ae=1+b, \quad bf=(1+e)(1+c), \quad cg=(1+f)(1+d), \quad dh=1+g, \quad ei=1+f\\
&fj=(1+i)(1+g), \quad gk=(1+j)(1+h), \quad il=1+j, \quad jm=(1+l)(1+k), \quad ln=1+m.\\
\end{align}

Solving these equations by given $a$, $b$, $c$ and $d$, the other $10$ entries are represented as follows:
\begin{itemize}
\item $e = \frac{b + 1}{a}$,
\item $f = \frac{(c+1) (a+b+1)}{a b}$,
\item $g =\frac{(d+1) (a b+a c+b c+a+b+c+1)}{a b c}$,
\item $h =\frac{a b c+a b d+a c d+b c d+a b+a c+a d+b c+b d+c d+a+b+c+d+1}{a b c d}$,
\item $i = \frac{ab + ac  + bc + a + b + c + 1}{b(b+ 1)}$,
\item $j = \frac{(b+c+1) (a b c+a b d+a c d+b c d+a b+a c+a d+b c+b d+c d+a+b+c+d+1)}{b (b+1) c (c+1)}$,
\item $k = \frac{(a+1) (bc+bd+cd+b+c+d+1)}{bcd}$,
\item $l = \frac{bc + bd  + cd + b + c + d + 1}{c(c+1)}$,
\item $m = \frac{(b+1)(c+d+1)}{cd}$,
\item $n = \frac{c + 1}{d}$.
\end{itemize}

Since they are all positive integers, the numerator in each fraction is more than or equal to its denominator.
It follows that the following $10$ inequalities are satisfied:
\begin{itemize}
\item[(i)] ${b + 1} \geq {a}$,
\item[(ii)] $(c+1) (a+b+1) \geq {a b}$,
\item[(iii)] $(d+1) (ab+ac+bc+a+b+c+1) \geq a b c$,
\item[(iv)] ${abc + abd +  acd + bcd + ab + ac + ad + bc + bd + cd +a  + b  + c + d + 1} \geq {abcd}$,
\item[(v)] ${ab + ac + bc + a + b + c + 1} \geq {b(b+1)}$,
\item[(vi)] $(b+c+1) (a b c+a b d+a c d+b c d+a b+a c+a d+b c+b d+c d+a+b+c+d+1) \geq b (b+1) c (c+1)$,
\item[(vii)] $(a+1) (bc+bd+cd+b+c+d+1) \geq {bcd}$,
\item[(viii)] ${bc + bd + cd + b + c + d + 1} \geq {c(c+1)}$,
\item[(ix)] ${(b+1)(c+d+1)} \geq {cd}$,
\item[(x)] ${c + 1} \geq {d}$.
\end{itemize}

By using these 10 inequalities, we can show the following claims logically. 

\begin{proposition}\label{3.1}
If $a >5$ and $d > 5$, then inequality {\rm (iv)}
is not satisfied.
\end{proposition}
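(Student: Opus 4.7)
The plan is to parallel the argument in Proposition~\ref{2.1} exactly. The first step is to recognize that the polynomial on the left-hand side of (iv) is precisely $(a+1)(b+1)(c+1)(d+1)-abcd$, which one sees by expanding the four-fold product and peeling off the $abcd$ term. Hence (iv) is equivalent to
$$\frac{(a+1)(b+1)(c+1)(d+1)}{abcd} \geq 2,$$
or, factor by factor,
$$\left(1+\frac{1}{a}\right)\left(1+\frac{1}{b}\right)\left(1+\frac{1}{c}\right)\left(1+\frac{1}{d}\right) \geq 2.$$
This multiplicative reformulation is the key step; it reduces (iv) to a monotonicity comparison, exactly as happens in the width-$3$ case.

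Next I would combine the hypothesis $a,d \geq 6$ with the companion inequalities (i) and (x). Inequality (i), $b+1 \geq a$, forces $b \geq 5$, and inequality (x), $c+1 \geq d$, forces $c \geq 5$. Since each factor $1+1/x$ is strictly decreasing in $x>0$, the product on the left-hand side is maximized, under the constraints $a,d \geq 6$ and $b,c \geq 5$, by taking $a=d=6$ and $b=c=5$. This gives the maximum value
$$\left(\frac{7}{6}\right)^{2}\left(\frac{6}{5}\right)^{2} = \frac{49}{25} < 2,$$
so the required inequality cannot hold and (iv) fails.

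I do not expect any real obstacle here. The only non-automatic ingredient is the algebraic identity relating the left-hand side of (iv) to $(a+1)(b+1)(c+1)(d+1)-abcd$; once that is observed, the proof is a one-line numerical check, entirely analogous to Proposition~\ref{2.1}. What makes the cutoff at $5$ (rather than at $4$ as in the width-$3$ case) tight is simply that one additional factor in the product allows the critical threshold to be reached one step later, so it is worth double-checking that $(7/6)^2(6/5)^2$ is strictly less than $2$, which it is.
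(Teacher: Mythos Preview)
Your argument is correct and is essentially the same as the paper's: both rewrite (iv) as $(a+1)(b+1)(c+1)(d+1)/(abcd)\ge 2$, use (i) and (x) to get $b,c\ge 5$ from $a,d\ge 6$, and then bound the left-hand side above by $(7/6)^2(6/5)^2=49/25<2$.
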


\begin{proof}
Inequality (iv) is equivalent to
$$
\frac{(a+1)(b+1)(c+1)(d+1)}{abcd} \geq 2.
$$
If $a > 5$ and $d > 5$, then $b \geq 5$ and $c \geq 5$ by (i) and (x).
In this case, the maximum of possible values of the left side term in the above inequality is
$49/25<2$. Hence, the inequality is not satisfied.
\end{proof}

By Proposition \ref{3.1}, we see that either $a \leq 5$ or $d \leq 5$ is satisfied.
Hereafter, we proceed our arguments under the condition $a \leq 5$.
Since inequalities (i)--(x) are symmetric with respect to $(a,b)$ and $(d,c)$,
we can obtain the symmetric conclusion if we start with the condition $d \leq 5$.
In this case, we have to handle inequality (iii) in the next proposition instead of (vii).

\begin{proposition}\label{3.2}
Suppose $a \leq 5$.
If $b >19$ and $d >19$, then inequality {\rm (vii)}
is not satisfied.
\end{proposition}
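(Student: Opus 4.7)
The plan is to mirror Proposition~\ref{3.1}: rewrite inequality (vii) so that the symmetric ratio $\prod(x+1)/x$ appears on one side, derive an upper bound on that ratio from the hypotheses together with inequality (x), and compare it against the lower bound on the other side coming from $a\le 5$.

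First I would use the identity $bc+bd+cd+b+c+d+1=(b+1)(c+1)(d+1)-bcd$ to rewrite (vii) as
\[
(a+1)(b+1)(c+1)(d+1)\ge (a+2)\,bcd,
\]
or, equivalently after dividing by $(a+1)\,bcd$,
\[
\frac{(b+1)(c+1)(d+1)}{bcd}\ge\frac{a+2}{a+1}.
\]

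Next I would convert the hypotheses into concrete lower bounds on $b$, $c$, $d$. Integrality together with $b>19$ and $d>19$ yields $b\ge 20$ and $d\ge 20$; inequality (x), namely $c+1\ge d$, then forces $c\ge 19$. Each factor on the left-hand side is therefore bounded above by $\tfrac{21}{20}$, $\tfrac{20}{19}$, $\tfrac{21}{20}$, respectively, so
\[
\frac{(b+1)(c+1)(d+1)}{bcd}\le\frac{21}{20}\cdot\frac{20}{19}\cdot\frac{21}{20}=\frac{441}{380}.
\]
Meanwhile, $\frac{a+2}{a+1}=1+\frac{1}{a+1}$ is decreasing in $a$, so $a\le 5$ gives $\frac{a+2}{a+1}\ge \frac{7}{6}$.

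Finally I would close the argument by verifying $\tfrac{441}{380}<\tfrac{7}{6}$ via cross-multiplication ($441\cdot 6=2646<2660=380\cdot 7$), which contradicts the reformulated (vii). The only subtle point I anticipate is recognizing that inequality (x) is the tool that promotes the hypothesis $d>19$ to a lower bound on $c$; without such a bound the factor $(c+1)/c$ is uncontrolled (potentially as large as $2$) and the estimate collapses. The numerical margin $2660-2646=14$ is fairly tight, which presumably dictates the sharp threshold $19$ in the statement rather than a smaller one.
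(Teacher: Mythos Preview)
Your proof is correct and follows essentially the same route as the paper: both rewrite (vii) via the identity $(b+1)(c+1)(d+1)=bcd+(bc+bd+cd+b+c+d+1)$ to obtain $\frac{(b+1)(c+1)(d+1)}{bcd}\ge\frac{7}{6}$ under $a\le 5$, invoke inequality (x) to get $c\ge 19$ from $d\ge 20$, and then bound the product by $\frac{21\cdot 21\cdot 20}{20\cdot 20\cdot 19}=\frac{441}{380}<\frac{7}{6}$. The only cosmetic difference is that you carry the general bound $\frac{a+2}{a+1}$ until the end, whereas the paper specialises to $a\le 5$ at the outset.
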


\begin{proof}
Under the condition $a \leq 5$, 
inequality (vii) implies
$$
5bc + 5bd + 5cd + 5b + 5c + 5d  + bc + bd + cd + 5 + b + c + d + 1 \geq {bcd},
$$
which is equivalent to
$$
\frac{(b+1)(c+1)(d+1)}{bcd} \geq \frac{7}{6}.
$$
If $b > 19$ and $d > 19$, then $c \geq 19$ by (x).
In this case, the maximum of possible values of the left side term in the above inequality is
$\frac{21 \cdot 21 \cdot 20}{20 \cdot 20 \cdot 19} <\frac{7}{6}$. Hence, the inequality is not satisfied.
\end{proof}

By Proposition \ref{3.2}, we see that either $b \leq 19$ or $d \leq 19$ is satisfied under the condition $a \leq 5$.
We first assume $b \leq 19$.

\begin{proposition}\label{3.3}
Suppose $b \leq 19$.
If $d >41$, then inequality {\rm (ix)}
is not satisfied.
\end{proposition}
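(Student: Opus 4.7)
The plan is to follow the template of Propositions \ref{3.1} and \ref{3.2}: recast inequality (ix) in a scale-invariant form by dividing through by $cd$, then use the hypotheses and inequality (x) to bound each term.

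Dividing $(b+1)(c+d+1) \geq cd$ by $cd$ gives the equivalent inequality
$$
(b+1)\left(\frac{1}{c} + \frac{1}{d} + \frac{1}{cd}\right) \geq 1.
$$
The hypothesis $b \leq 19$ yields $b+1 \leq 20$, while $d > 41$ means $d \geq 42$. To control $c$, I would invoke inequality (x), which gives $c \geq d-1 \geq 41$. The factor $\tfrac{1}{c} + \tfrac{1}{d} + \tfrac{1}{cd}$ is monotonically decreasing in both $c$ and $d$, so it attains its maximum over the admissible range at $c = 41$, $d = 42$.

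The final step is a direct numerical check: at this extremal point,
$$
\frac{1}{41} + \frac{1}{42} + \frac{1}{41 \cdot 42} = \frac{42 + 41 + 1}{1722} = \frac{84}{1722} = \frac{2}{41},
$$
so the left side of the scale-invariant form is at most $20 \cdot \tfrac{2}{41} = \tfrac{40}{41} < 1$, contradicting (ix).

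I do not expect any genuine obstacle: the proof is completely parallel to those of the two previous propositions, with the only subtlety being the combined use of $b \leq 19$ and (x) to pinch $c$ and $d$ into a range where the sum of reciprocals is small enough. The numerical margin $\tfrac{40}{41}$ is not especially tight, which suggests the bound $d > 41$ in the statement is not the sharpest possible, but it is adequate for the enumeration strategy being built up in this section.
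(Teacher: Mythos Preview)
Your proof is correct and follows essentially the same approach as the paper: bound $b+1\le 20$, use (x) to force $c\ge 41$ from $d\ge 42$, and then check numerically at the extremal pair $(c,d)=(41,42)$. The only cosmetic difference is that the paper first absorbs $b+1\le 20$ into the inequality and rewrites it as $\tfrac{(c+1)(d+1)}{cd}\ge\tfrac{21}{20}$ before checking $\tfrac{43\cdot 42}{42\cdot 41}<\tfrac{21}{20}$, whereas you keep $(b+1)$ as a separate factor and evaluate $\tfrac{1}{c}+\tfrac{1}{d}+\tfrac{1}{cd}$ directly; the two computations are algebraically equivalent.
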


\begin{proof}
Under the condition $b \leq 19$, 
inequality (ix) implies
$$
20c+20d+20 \geq cd,
$$
which is equivalent to
$$
\frac{(c+1)(d+1)}{cd} \geq \frac{21}{20}.
$$
If $d >41$, then $c \geq 41$ by (x).
In this case, the maximum of possible values of the left side term in the above inequality is
$\frac{43 \cdot 42}{42 \cdot 41} <\frac{21}{20}$. Hence, the inequality is not satisfied.
\end{proof}

Proposition \ref{3.3} implies that if either $b \leq 19$ or $d \leq 19$ then $d \leq 41$.
Thus, we have obtained that if $a \leq 5$ then $d \leq 41$.

\begin{proposition}\label{3.4}
Suppose $a \leq 5$ and $d \leq 41$.
If $b >102$ and $c >102$, then inequality {\rm (vi)}
is not satisfied.
\end{proposition}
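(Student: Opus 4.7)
The plan is to follow the strategy of Propositions~\ref{3.1}--\ref{3.3}: rewrite (vi) in a normalized form, use the hypotheses on $a$ and $d$ to bound its left-hand side by an expression in $b, c$ alone, and show that the resulting condition fails. First I would verify the identity
\[
(a+1)(b+1)(c+1)(d+1) - abcd = ad(b+c+1) + (a+d+1)(bc+b+c+1),
\]
obtained by direct expansion, and divide (vi) through by $bc(b+1)(c+1)(b+c+1)$ to rewrite it in the equivalent form
\[
\frac{(a+1)(d+1)}{bc} - \frac{ad}{(b+1)(c+1)} \geq \frac{1}{b+c+1}.
\]

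Since $(b+1)(c+1) > bc$, the left-hand side is strictly increasing in each of $a$ and $d$ separately, so under $a \leq 5$ and $d \leq 41$ it is bounded above by its value at $a=5$, $d=41$, namely
\[
\frac{252}{bc} - \frac{205}{(b+1)(c+1)} = \frac{47\,bc + 252(b+c+1)}{bc(b+1)(c+1)}.
\]
Clearing denominators, a necessary condition for (vi) becomes
\[
46\,bc(b+c+1) + 252(b+c+1)^2 \geq b^2c^2.
\]
Dividing by $b^2c^2$ and setting $t := (b+c+1)/(bc)$ reduces this to the one-variable quadratic inequality $252\,t^2 + 46\,t - 1 \geq 0$, which for $t > 0$ holds only when $t \geq t_0 := (\sqrt{3124}-46)/504$.

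Finally, since $t = 1/b + 1/c + 1/(bc)$ is strictly decreasing in each of $b$ and $c$, its maximum under $b, c \geq 103$ is attained at $b = c = 103$, where $t = 207/10609$. A direct numerical check---reducing after squaring to the integer inequality $592342^2 < 10609^2 \cdot 3124$---shows $207/10609 < t_0$, so the quadratic condition fails and therefore (vi) is not satisfied. The main obstacle is algebraic: unlike inequalities (iv), (vii), and (ix), the expression $(a+1)(b+1)(c+1)(d+1) - abcd$ does not factor into a clean product, so the key insight is to separate the $a,d$-dependence from the $b,c$-dependence after normalization and recognize the resulting inequality as a quadratic in the single combined quantity $t$.
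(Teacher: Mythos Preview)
Your argument is correct and follows the same overall route as the paper: use $a\le 5$ and $d\le 41$ to bound the left side of (vi), reducing to the two-variable inequality $(b+c+1)\bigl(47bc+252(b+c+1)\bigr)\ge bc(b+1)(c+1)$, and then verify this fails for $b,c\ge 103$. The paper records this same intermediate inequality (with the coefficient $247$ in place of your $252$; your value is the one that actually follows from $ad\le 205$ and $a+d+1\le 47$) and simply asserts that it is violated for $b,c>102$, whereas your substitution $t=(b+c+1)/(bc)$ and the resulting quadratic $252t^{2}+46t-1\ge 0$ give a clean explicit verification of that final step.
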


\begin{proof}
Under the condition $a \leq 5$ and $d \leq 41$, 
inequality (vi) implies
$$
(b+c+1) (47bc+247b+247c+247) \geq b (b+1) c (c+1).
$$
This is not satisfied if $b >102$ and $c >102$.
\end{proof}

By Proposition \ref{3.4}, we see that either $b \leq 102$ or $c \leq 102$ is satisfied under the conditions
$a \leq 5$ and $d \leq 41$.

\begin{proposition}\label{3.5}
$(1)$
Suppose $d \leq 41$ and $b \leq 102$.
If $c >168$, then inequality {\rm (viii)}
is not satisfied. $(2)$ Suppose $d \leq 41$ and $c \leq 102$.
If $b >168$, then inequality {\rm (v)}
is not satisfied.
\end{proposition}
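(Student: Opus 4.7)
The plan for both parts is identical in strategy: rearrange the relevant inequality into a quadratic in a single unknown, majorize the coefficients using the hypothesized bounds, and verify that the resulting quadratic is strictly positive at the stated threshold. For part (1), I would rewrite inequality (viii) in the form $c^2 - (b+d)c - (b+1)(d+1) \leq 0$, whose left side is monotone decreasing in both $b$ and $d$. Substituting $b \leq 102$ and $d \leq 41$ yields the necessary condition $c^2 - 143c - 4326 \leq 0$. A direct evaluation at $c = 169$ gives $169^2 - 143\cdot 169 - 4326 = 68 > 0$, and since $c^2 - 143c - 4326$ is increasing on $c \geq 72$, it is strictly positive for every $c > 168$; hence (viii) must fail.

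For part (2), the mirror rearrangement of (v) reads $b^2 - (a+c)b - (a+1)(c+1) \leq 0$. Since this expression does not involve $d$, the explicit hypothesis $d \leq 41$ cannot be used directly; instead I would invoke the standing assumption $a \leq 5$ imposed at the start of the section, together with $c \leq 102$, to obtain $b^2 - 107b - 618 \leq 0$, which in fact forces $b \leq 112 < 168$. A more conceptual alternative is to exploit the involution $(a,b) \leftrightarrow (d,c)$, which preserves the system (i)--(x) and interchanges (v) with (viii): under this swap the pair $(a \leq 5,\, c \leq 102)$ plays the role of $(d \leq 41,\, b \leq 102)$ (using $5 \leq 41$), so part (2) reduces immediately to part (1).

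There is no substantive obstacle. The only point requiring a moment of care is that inequality (v) does not literally involve $d$, so the hypothesis $d \leq 41$ must be replaced either by the standing bound $a \leq 5$ or by invoking the $(a,b) \leftrightarrow (d,c)$ symmetry to make $d \leq 41$ accessible. The common numerical bound $168$ is evidently chosen for symmetry with part (1); the direct estimate in part (2) is in fact considerably tighter.
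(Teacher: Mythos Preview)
Your proposal is correct and matches the paper's approach: in both parts the inequality is rearranged into a quadratic in the single unbounded variable, the remaining coefficients are majorized, and the threshold is checked. For part~(2) the paper in fact derives the same quadratic $b^{2}-143b-4326\le 0$, which requires $a\le 41$ (coming from the standing hypothesis $a\le 5$) together with $c\le 102$ rather than the literally stated $d\le 41$; you correctly flagged this point, and your direct use of $a\le 5$ simply yields a sharper constant.
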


\begin{proof}
(1) Under the condition $d \leq 41$ and $b \leq 102$,
inequality (viii) implies
$$
c^2-143c-4326 \leq 0.
$$
This is not satisfied if $c >168$. (2) Under the condition $d \leq 41$ and $c \leq 102$,
inequality (v) implies
$$
b^2-143b-4326 \leq 0.
$$
This is not satisfied if $b >168$.
\end{proof}

Proposition \ref{3.5} implies that either $b \leq 102$ and $c \leq 168$, or $b \leq 168$ and $c \leq 102$. Therefore, if $a \leq 5$, then $d \leq 41$, and this condition is also satisfied. By the symmetry noted above, if $d \leq 5$, then $a \leq 41$, and the same conclusion follows. As a consequence, we obtain the following.

\begin{lemma}
An arithmetic Y-frieze of width $4$ exists only when either
\begin{itemize}
\item $a \leq 5$, $b \leq 102$, $c \leq 168$, and $d \leq 41$,
\item $a \leq 5$, $b \leq 168$, $c \leq 102$, and $d \leq 41$,
\item $a \leq 41$, $b \leq 102$, $c \leq 168$, and $d \leq 5$, or
\item $a \leq 41$, $b \leq 168$, $c \leq 102$, and $d \leq 5$.
\end{itemize}
\end{lemma}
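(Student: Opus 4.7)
The plan is to assemble the conclusions of Propositions \ref{3.1}--\ref{3.5} sequentially, carefully tracking the case splits and invoking the stated $(a,b)\leftrightarrow(d,c)$ symmetry of inequalities (i)--(x) at the end.

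First I would apply Proposition \ref{3.1}, which forces $a\leq 5$ or $d\leq 5$. By the symmetry of (i)--(x) under $(a,b)\leftrightarrow(d,c)$, it suffices to derive the first two bullets from the hypothesis $a\leq 5$; then the same argument with the roles of $(a,b)$ and $(d,c)$ swapped, applied under $d\leq 5$, will yield the last two bullets.

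Next, assuming $a\leq 5$, Proposition \ref{3.2} splits into the subcases $b\leq 19$ or $d\leq 19$. In the first subcase Proposition \ref{3.3} directly gives $d\leq 41$; in the second subcase $d\leq 19\leq 41$ already, so in either event $d\leq 41$. Now with $a\leq 5$ and $d\leq 41$ in hand, Proposition \ref{3.4} splits into $b\leq 102$ or $c\leq 102$. In the branch $b\leq 102$, Proposition \ref{3.5}(1) (whose hypotheses $d\leq 41$ and $b\leq 102$ are met) gives $c\leq 168$, yielding the first bullet. In the branch $c\leq 102$, Proposition \ref{3.5}(2) gives $b\leq 168$, yielding the second bullet.

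I do not anticipate any real obstacle: the lemma is purely a bookkeeping assembly of the preceding propositions. The only point requiring a moment of care is the invocation of symmetry to obtain the third and fourth bullets from the first and second, verifying that each of (i)--(x) is indeed invariant under the simultaneous swaps $a\leftrightarrow d$ and $b\leftrightarrow c$ (this is visible by inspection of the numerators and denominators in the formulas for $e,\dots,n$, which come in symmetric pairs $(e,n)$, $(f,m)$, $(g,l)$, $(h,k)$, $(i,j)$).
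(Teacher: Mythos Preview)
Your assembly of Propositions \ref{3.1}--\ref{3.5} and the appeal to the $(a,b)\leftrightarrow(d,c)$ symmetry is exactly the paper's argument, and the case analysis is carried out correctly. One cosmetic slip in your parenthetical: under this swap the symmetric pairs among $e,\dots,n$ are $(e,n)$, $(f,m)$, $(g,k)$, $(i,l)$, with $h$ and $j$ each self-symmetric, not $(g,l)$, $(h,k)$, $(i,j)$ as you wrote; this does not affect the proof.
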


In this setting, once bounds for $(a,b,c,d)$ are established, we can similarly identify 
the cases where $e,f,g,h,i,j,k,l,m,n$ are positive integers through numerical experiments—verifying the equations finitely many times—and thus obtain all integer solutions to \eqref{10equations}. 

\begin{theorem}\label{42(abcd)}
An arithmetic Y-frieze pattern of width $4$ appears exactly in the following 42 sets of
$(a,b,c,d,e,f,g,h,i,j,k,l,m,n)$:
\end{theorem}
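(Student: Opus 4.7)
The plan is exactly the one announced in the paragraph immediately preceding the theorem: combine the finite bounds from the Lemma with an exhaustive search. First I would restrict attention to the four boxes for $(a,b,c,d)$ listed in the Lemma, whose union contains all possible first diagonals. For each candidate $(a,b,c,d)$ in one of these boxes, I would evaluate the ten explicit rational expressions for $e,f,g,h,i,j,k,l,m,n$ given on page 3, and keep only those tuples for which all ten values come out as positive integers. Since the ten entries are then forced by $(a,b,c,d)$, and since any arithmetic Y-frieze of width $4$ has its first diagonal inside one of the four boxes, the resulting list is automatically exhaustive.

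For the converse direction I would verify, for each of the $42$ claimed tuples, that the ten diamond equations \eqref{10equations} are satisfied and that every entry is a positive integer. This is a routine finite check which can be discharged by substitution, either by hand or with the same program used for the search, and which also certifies that each surviving $(a,b,c,d)$ actually extends to a closed arithmetic Y-frieze pattern (i.e., that the next row is identically $0$, which in the fundamental domain picture is the statement that the glide-symmetric extension closes up after one more row).

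Several simple optimizations make the search practical. The raw box sizes are of order $5 \cdot 102 \cdot 168 \cdot 41$ per region, but one can prune aggressively: test inequalities (i)--(x) as soon as the relevant variables are fixed, and, more importantly, impose the integrality conditions $a \mid b+1$, $d \mid c+1$, $ab \mid (c+1)(a+b+1)$, etc., coming from $e, n, f, m$, turning the outer loops into enumerations over divisors rather than over all integers up to the bound. The symmetry $(a,b) \leftrightarrow (d,c)$ of the defining equations can also be used to halve the work, identifying in advance which tuples in the list are self-symmetric.

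The main obstacle I expect is not mathematical but logistical: certifying the correctness of the computer enumeration. The algebra behind each individual step (evaluating a rational function, testing whether it is a positive integer, comparing with the finite bounds) is elementary, and the Lemma has already done the hard work of reducing an a priori infinite problem to a finite one. What remains is to present the enumeration in a form that a reader can independently reproduce — ideally by tabulating all $42$ tuples explicitly, together with the corresponding Coxeter frieze patterns when one wants to deduce, as in the $n=3$ case, the surjectivity of the map $p_4$.
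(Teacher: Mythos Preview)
Your proposal is correct and matches the paper's own argument essentially verbatim: the paper also reduces to the finite boxes of the Lemma and then performs a finite computer check that the ten derived entries are positive integers, listing the resulting $42$ tuples. Your additional remarks on pruning via divisibility, exploiting the $(a,b)\leftrightarrow(d,c)$ symmetry, and explicitly verifying the converse direction go slightly beyond what the paper states, but the underlying method is the same.
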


\begin{tabular}{ll}
(1, 1, 2, 3, 2, 9, 20, 7, 5, 14, 6, 3, 2, 1) &
(1, 1, 4, 5, 2, 15, 24, 5, 8, 15, 4, 2, 1, 1)\\ 
(1, 2, 3, 2, 3, 8, 9, 5, 3, 5, 4, 2, 3, 2)&
(1, 2, 3, 4, 3, 8, 15, 4, 3, 8, 3, 3, 2, 1)\\ 
(1, 2, 6, 7, 3, 14, 20, 3, 5, 9, 2, 2, 1, 1)&
(1, 2, 9, 5, 3, 20, 14, 3, 7, 6, 2, 1, 1, 2)\\
(1, 3, 8, 3, 4, 15, 8, 3, 4, 3, 2, 1, 2, 3)&
(1, 4, 5, 3, 5, 9, 8, 3, 2, 3, 2, 2, 3, 2)\\ 
(1, 4, 15, 8, 5, 24, 15, 2, 5, 4, 1, 1, 1, 2)& 
(1, 6, 14, 5, 7, 20, 9, 2, 3, 2, 1, 1, 2, 3)\\ 
(2, 1, 1, 2, 1, 4, 15, 8, 5, 24, 15, 5, 4, 1)& 
(2, 1, 2, 3, 1, 6, 14, 5, 7, 20, 9, 3, 2, 1)\\ 
(2, 3, 2, 1, 2, 3, 4, 5, 2, 5, 9, 3, 8, 3)& 
(2, 3, 2, 3, 2, 3, 8, 3, 2, 9, 5, 5, 4, 1)\\ 
(2, 3, 4, 5, 2, 5, 9, 2, 3, 8, 3, 3, 2, 1)& 
(2, 3, 8, 3, 2, 9, 5, 2, 5, 4, 3, 1, 2, 3)\\
(2, 5, 9, 2, 3, 8, 3, 2, 3, 2, 3, 1, 4, 5)& 
(2, 9, 5, 2, 5, 4, 3, 2, 1, 2, 3, 3, 8, 3)\\ 
(2, 9, 20, 7, 5, 14, 6, 1, 3, 2, 1, 1, 2, 3)& 
(2, 15, 24, 5, 8, 15, 4, 1, 2, 1, 1, 1, 4, 5)\\ 
(3, 2, 1, 1, 1, 2, 6, 7, 3, 14, 20, 5, 9, 2)& 
(3, 2, 1, 2, 1, 2, 9, 5, 3, 20, 14, 7, 6, 1)\\ 
(3, 2, 2, 3, 1, 3, 8, 3, 4, 15, 8, 4, 3, 1)& 
(3, 2, 3, 2, 1, 4, 5, 3, 5, 9, 8, 2, 3, 2)\\ 
(3, 5, 4, 1, 2, 3, 2, 3, 2, 3, 8, 2, 9, 5)& 
(3, 8, 3, 1, 3, 2, 2, 3, 1, 3, 8, 4, 15, 4)\\ 
(3, 8, 3, 2, 3, 2, 3, 2, 1, 4, 5, 5, 9, 2)& 
(3, 8, 9, 5, 3, 5, 4, 1, 2, 3, 2, 2, 3, 2)\\ 
(3, 8, 15, 4, 3, 8, 3, 1, 3, 2, 2, 1, 3, 4)& 
(3, 14, 20, 3, 5, 9, 2, 1, 2, 1, 2, 1, 6, 7)\\ 
(3, 20, 14, 3, 7, 6, 2, 1, 1, 1, 2, 2, 9, 5)& 
(4, 3, 2, 1, 1, 2, 3, 4, 3, 8, 15, 3, 8, 3)\\ 
(4, 15, 8, 3, 4, 3, 2, 1, 1, 2, 3, 3, 8, 3)& 
(5, 4, 1, 1, 1, 1, 4, 5, 2, 15, 24, 8, 15, 2)\\ 
(5, 4, 3, 2, 1, 2, 3, 2, 3, 8, 9, 3, 5, 2)& 
(5, 9, 2, 1, 2, 1, 2, 3, 1, 6, 14, 7, 20, 3)\\ 
(5, 9, 8, 3, 2, 3, 2, 1, 2, 3, 4, 2, 5, 3)& 
(5, 14, 6, 1, 3, 2, 1, 2, 1, 2, 9, 3, 20, 7)\\ 
(5, 24, 15, 2, 5, 4, 1, 1, 1, 1, 4, 2, 15, 8)& 
(7, 6, 2, 1, 1, 1, 2, 3, 2, 9, 20, 5, 14, 3)\\ 
(7, 20, 9, 2, 3, 2, 1, 1, 1, 2, 6, 3, 14, 5)& 
(8, 15, 4, 1, 2, 1, 1, 2, 1, 4, 15, 5, 24, 5)\\
\end{tabular}
\medskip

\begin{corollary}
The map $p_4:{\rm Frieze}(4) \to {\rm YFrieze}(4)$ is bijective.
\end{corollary}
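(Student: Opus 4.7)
The plan is to leverage a cardinality coincidence. By the classical Conway-Coxeter bijection with triangulations of convex polygons, $|\mathrm{Frieze}(4)| = C_{4+1} = 42$; Theorem \ref{42(abcd)} simultaneously establishes $|\mathrm{YFrieze}(4)| = 42$. Hence $p_4$ is a map between two finite sets of equal cardinality, so bijectivity will follow from either surjectivity or injectivity alone.

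I would pursue surjectivity by explicit enumeration, mirroring the tabulated correspondence between Coxeter friezes and Y-friezes given in Section~2 for $p_3$. For each of the $42$ Y-friezes listed in Theorem \ref{42(abcd)}, the task is to produce an arithmetic Coxeter frieze of width $4$ whose image under $p_4$ realizes it; concretely, whose ``second row'' (the row computed from the quiddity sequence via the unimodular rule $WE - NS = 1$) matches the prescribed first row $(a,e,i,l,n,\ldots)$ of the target Y-frieze. Organizing the matching by dihedral rotation orbits --- paralleling the $s{:}t$ labels used in the $p_3$ tables --- keeps the bookkeeping systematic: each rotation orbit of Y-friezes is paired with the rotation orbit of heptagon triangulations producing it.

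The main obstacle is purely combinatorial bulk rather than any subtle structural question. With $42$ patterns on each side, the verification amounts to reading off second rows of all $42$ heptagon triangulations and matching them to the first rows of the $42$ Y-friezes catalogued in Theorem \ref{42(abcd)}. A cleaner shortcut would be an injectivity result for even $n$, in the spirit of \cite[Lemma 7.5]{CH} (which bounds fibers of $p_n$ by $2$ for odd $n$): any structural argument showing that $p_4$ is injective would, via the equal cardinalities, immediately upgrade to bijectivity. Either way, the equinumerosity $42 = 42$ --- notably failing in the odd case $n = 3$ where $|\mathrm{Frieze}(3)| = 14 \neq 10 = |\mathrm{YFrieze}(3)|$ --- is the decisive structural fact driving the proof.
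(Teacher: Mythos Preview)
Your proposal is correct, and you have identified the decisive ingredient: the equality of cardinalities $|\mathrm{Frieze}(4)| = C_5 = 42 = |\mathrm{YFrieze}(4)|$, the latter coming from Theorem~\ref{42(abcd)}. However, the paper takes precisely the ``cleaner shortcut'' you mention in passing rather than the explicit surjectivity check you propose as the main line. The injectivity of $p_n$ for even $n$ is in fact already available in the literature: it is observed in \cite{dSG0} that this follows from \cite[Remark~1.18]{MG}. The paper's proof therefore consists of one sentence citing that injectivity, one sentence recording $42 = 42$, and the conclusion.

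What your approach buys is self-containment: an explicit tabulation of preimages, parallel to the $p_3$ tables in Section~2, would make the correspondence concrete and independent of external references. What the paper's approach buys is brevity and avoidance of a $42$-case verification. Since you already flagged the injectivity route as preferable if available, the only thing you were missing is the knowledge that it \emph{is} available, via \cite{MG}.
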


\begin{proof}
It is observed in \cite{dSG0} that the injectivity of the map $p_n : {\rm Frieze}(n) \to {\rm YFrieze}(n)$ for even $n \geq 1$ follows from \cite[Remark 1.18]{MG}. Since $|{\rm Frieze}(4)| = C_5 = 42$ and $|{\rm YFrieze}(4)| = 42$ by Theorem \ref{42(abcd)}, it follows that $p_4$ is bijective.
\end{proof}


\begin{thebibliography}{99}

\bibitem{Cox}
H. S. M. Coxeter, {\it Frieze patterns}, Acta Arith. {\bf 18} (1971), 297--310.
\bibitem{dSG0}
A. de St. Germain, {\it When frieze patterns meet Y-systems: Y-frieze patterns},
Math. Intelligencer {\bf 47} (2025), 72--76. 
\bibitem{dSG}
A. de St. Germain, {\it Y-frieze patterns}, arxiv:2311.03073.
\bibitem{CH}
M. Cuntz and T. Holm, {\it Frieze patterns over integers and
other subsets of the complex numbers}, J. Comb. Algebra {\bf 3}
(2019), 153--188.
\bibitem{MG}
S. Morier-Genoud, {\it Arithmetics of 2-friezes}, J. Algebraic
Combin. {\bf 36} (2012), 515--539.


\end{thebibliography}
\end{document}